\newtheorem{theorem}{Theorem}[section]
\newtheorem{corollary}[theorem]{Corollary}
\newtheorem{remark}[theorem]{Remark}
\begin{document}
\setcounter{page}{1}
\title{Estimates for sums of eigenvalues of the free plate with nonzero Poisson's ratio}
\author{Shan Li,~ Jing Mao$^{\ast}$}

\date{}
\protect\footnotetext{\!\!\!\!\!\!\!\!\!\!\!\!{$^{\ast}$Corresponding author}\\
{MSC 2020: 35P15, 53C42.}
\\
{ ~~Key Words: Eigenvalues; Fourier transform; The bi-Laplace
operator; Free plate problem; Poisson's ratio.}}
\maketitle ~~~\\[-15mm]

\begin{center}
{\footnotesize Faculty of Mathematics and Statistics, \\
Key Laboratory of Applied Mathematics of Hubei Province, \\
Hubei University, Wuhan 430062, China \\
Email: jiner120@163.com }
\end{center}


\begin{abstract}
By using the Fourier transform, we successfully give Kr\"{o}ger-type
estimates for sums of eigenvalues of the free plate (under tension
and with nonzero Poisson's ratio) in terms of the dimension of the
ambient space, the volume of the domain, the tension parameter and
the Poisson's ratio.
 \end{abstract}

\markright{\sl\hfill S. Li, J. Mao \hfill}

\section{Introduction}
\renewcommand{\thesection}{\arabic{section}}
\renewcommand{\theequation}{\thesection.\arabic{equation}}
\setcounter{equation}{0}

For a bounded domain $\Omega$ in the Euclidean $n$-space
$\mathbb{R}^{n}$  with smooth boundary $\partial\Omega$, $n\geq2$,
the classical free membrane problem with the Neumann boundary
condition is actually the following boundary value problem (BVP for
short)
\begin{eqnarray} \label{1-1}
 \left\{
\begin{array}{ll}
\Delta u+\mu u=0 \qquad \qquad &\mathrm{in} ~ \Omega,\\
\frac{\partial u}{\partial\vec{v}}=0 \qquad \qquad &\mathrm{on} ~
\partial \Omega,
\end{array}
\right.
\end{eqnarray}
where $\Delta$ is the Lapacian and $\vec{v}$ denotes the outward
unit normal vector of $\partial\Omega$. It is well-known that
$\Delta$ in (\ref{1-1}) only has discrete spectrum and all the
elements (i.e., eigenvalues), with finite multiplicity, in its
spectrum can be listed non-decreasingly as follows
\begin{eqnarray*}
0=\mu_{1}(\Omega)<\mu_{2}(\Omega)\leq\mu_{3}(\Omega)\leq\cdots\uparrow\infty.
\end{eqnarray*}
For the BVP (\ref{1-1}), there are so many interesting and existing
estimates for Neumann eigenvalues $\mu_{i}(\Omega)$. Here, we would
like to mention the following two facts:

\begin{itemize}

\item (Szeg\H{o} \cite{gs1,gs2}, Weinberg \cite{hfw})  \emph{Among all domains with fixed volume,
the lowest nonzero Neumann eigenvalue $\mu_{2}(\Omega)$ is maximized
by a ball}.

\item (Kr\"{o}ger \cite{pk}) \emph{Estimates
\begin{eqnarray*}
\sum\limits_{i=1}^{m}\mu_{i}(\Omega)\leq
(2\pi)^{2}\frac{n}{n+2}\left(w_{n}|\Omega|\right)^{\frac{2}{n}}m^{\frac{n+2}{n}},~~~~~m\geq1
\end{eqnarray*}
and
\begin{eqnarray*}
\mu_{m+1}(\Omega)\leq
(2\pi)^{2}\left(\frac{n+2}{2w_{n}|\Omega|}\right)^{\frac{2}{n}}m^{\frac{2}{n}},~~~~~m\geq0
\end{eqnarray*}
hold, where $|\Omega|$, $w_{n}$ denote the volume of $\Omega$ and
the volume of the unit ball in $\mathbb{R}^{n}$, respectively}.

\end{itemize}

Consider the following eigenvalue problem of free plate under
tension
\begin{eqnarray} \label{1-2}
 \left\{
\begin{array}{lll}
\Delta^{2}u-\tau\Delta u= \Lambda u \qquad \qquad &\mathrm{in} ~ \Omega,\\
\frac{\partial^{2}u}{\partial\vec{v}^{2}}=0\qquad \qquad
&\mathrm{on} ~
\partial \Omega,\\
\tau\frac{\partial
u}{\partial\vec{v}}-\mathrm{div}_{\partial\Omega}\left({\mathrm{Proj}}_{\partial\Omega}\left[(D^{2}u)\vec{v}\right]\right)-\frac{\partial\Delta
u}{\partial\vec{v}}=0 \qquad \qquad &\mathrm{on} ~
\partial \Omega,
\end{array}
\right.
\end{eqnarray}
where $\Delta^{2}$ is the bi-Laplace operator in
$\Omega\subset\mathbb{R}^n$, $\mathrm{div}_{\partial\Omega}$ is the
surface divergence on $\partial\Omega$, the operator
${\mathrm{Proj}}_{\partial\Omega}$ projects onto the space tangent
to $\partial\Omega$, $D^{2}u$ denotes the Hessian matrix, and other
same symbols have the same meanings as those in (\ref{1-1}).
Physically, when $n=2$, $\Omega$ is the shape of a homogeneous,
isotropic plate, and the parameter $\tau$ is the ratio of lateral
tension to flexural rigidity of the plate. Positive $\tau$
corresponds to a plate under tension, while negative $\tau$ gives us
a plate under compression. Chasman \cite[Section 4]{lmc1} proved
that if $\tau\geq0$, the operator $\Delta^{2}-\tau\Delta$ in the BVP
(\ref{1-2}) has the discrete spectrum and all the eigenvalues, with
finite multiplicity, in this spectrum can be listed non-decreasingly
as follows
\begin{eqnarray*}
0=\Lambda_{1}(\Omega)<\Lambda_{2}(\Omega)\leq\Lambda_{3}(\Omega)\leq\cdots\uparrow\infty.
\end{eqnarray*}
Moreover, Chasman \cite[Theorem 1]{lmc1} showed that:

\begin{itemize}
\item \emph{Among all domains with fixed volume, the lowest nonzero
 eigenvalue $\Lambda_{2}(\Omega)$ for a free plate under
tension (i.e., $\tau>0$) is maximized by a ball}.
\end{itemize}

Could one expect Kr\"{o}ger-type estimates for $\Lambda_{i}(\Omega)$
of the BVP (\ref{1-2}) provided $\tau\geq0$ ?

Very recently, Brandolini, Chiacchio and Langford \cite[Theorem 1
and Corollary 2]{bcl} gave a positive answer to the above question.

After getting the important isoperimetric inequality for
$\Lambda_{2}(\Omega)$ of a free plate under tension, Chasman
considered the following eigenvalue problem of free plate under
tension and \emph{with nonzero Poisson's ratio}
\begin{eqnarray} \label{1-3}
 \left\{
\begin{array}{lll}
\Delta^{2}u-\tau\Delta u= \Gamma u \qquad \qquad &\mathrm{in} ~ \Omega,\\
(1-\sigma)\frac{\partial^{2}u}{\partial\vec{v}^{2}}+\sigma\Delta
u=0\qquad \qquad &\mathrm{on} ~
\partial \Omega,\\
\tau\frac{\partial
u}{\partial\vec{v}}-(1-\sigma)\mathrm{div}_{\partial\Omega}\left({\mathrm{Proj}}_{\partial\Omega}\left[(D^{2}u)\vec{v}\right]\right)-\frac{\partial\Delta
u}{\partial\vec{v}}=0 \qquad \qquad &\mathrm{on} ~
\partial \Omega,
\end{array}
\right.
\end{eqnarray}
where $\sigma$ is the Poisson's ratio\footnote{ Poisson's ratio is a
property of the material of the plate. Usually, if a material is
stretched in one direction, it contracts in the orthogonal
directions. In such situation, the value $\sigma$ is a ratio of the
strains. However, some materials expand in the orthogonal directions
rather than contracting, and then have $\sigma<0$, which leads to
the situation that they are called auxetic.} and other same symbols
have the same meanings as those in (\ref{1-2}). Typically, $\sigma$
is taken to be $\sigma\in[0,0.5]$ for real-world materials, although
 a class of materials known as auxetics have negative Poisson's
ratio. However, in order to be assured of coercivity of the
sesquilinear form $a(u,v)$ defined  by  (\ref{2-1}) (see also
\cite[Section 4]{lmc2}), one needs to require
$\sigma\in(-1/(n-1),1)$. Chasman \cite[Section 4]{lmc2} explained
that if $\tau\geq0$ and $\sigma\in(-1/(n-1),1)$, the operator
$\Delta^{2}-\tau\Delta$ in the BVP (\ref{1-3}) has the discrete
spectrum and all the eigenvalues\footnote{ See also \textbf{FACT} in
Section \ref{s2} for the reason why $\Gamma_{i}(\Omega)$ is
nonnegative provided $\tau\geq0$ and $\sigma\in(-1/(n-1),1)$.}, with
finite multiplicity, in this spectrum can be listed non-decreasingly
as follows
\begin{eqnarray*}
0=\Gamma_{1}(\Omega)<\Gamma_{2}(\Omega)\leq\Gamma_{3}(\Omega)\leq\cdots\uparrow\infty.
\end{eqnarray*}
 He also proved that similar to
$\Lambda_{2}(\Omega)$, the ball with the same volume maximizes
$\Gamma_{2}(\Omega)$ if the free plate is under tension and  one of
the followings holds:
\begin{itemize}

\item $n=2$ and $\sigma>-51/97$ or $\tau\geq3(\sigma-1)/(\sigma+1)$,

\item $n=3$,

\item $n\geq4$ and $\sigma\leq0$ or $\tau\geq(n+2)/2$.

\end{itemize}
However, numerical and analytic evidences suggest that this fact
should hold for $\tau>0$, $\sigma\in(-1/(n-1),1)$ -- see
\cite[Section 8]{lmc2} for details. Based on this, Chasman
\cite{lmc2} conjectured:

\begin{itemize}
\item \emph{Among all domains with fixed volume, the lowest nonzero
 eigenvalue $\Gamma_{2}(\Omega)$ for a free plate under
tension, with Poisson's ratio $\sigma\in(-1/(n-1),1)$, is maximized
by a ball}.
\end{itemize}
This conjecture is open, and the best partial answers so far are due
to Chasman \cite{lmc1,lmc2}.

Inspired by Brandolini-Chiacchio-Langford's Kr\"{o}ger-type
estimates for $\Lambda_{i}(\Omega)$ and Chasman's Szeg\H{o}-Weinberg
type isoperimetric inequalities for  $\Lambda_{2}(\Omega)$ and
$\Gamma_{2}(\Omega)$, one might ask

\vspace{3mm}

\textbf{Question}. \emph{Is it possible to get Kr\"{o}ger-type
estimates for $\Gamma_{i}(\Omega)$ of the BVP (\ref{1-3}) ?}

\vspace{3mm}

The answer is positive. In fact, we can prove:

\begin{theorem} \label{maintheorem}
Let $\Omega \subset \mathbb{R}^n$ be a smooth bounded domain, and
let ${\Gamma_{j}}(\Omega)$ be the $j$-th eigenvalue of the BVP
(\ref{1-3}). If $\tau\geq0$ and $\sigma\in(-1/(n-1),1)$, then
\begin{eqnarray*}
\sum_{j=1}^{m}\Gamma_{j}(\Omega)\leq(2\pi)^{4}\frac{n}{n+4}\left(\frac{1}{w_{n}|\Omega|}\right)^{\frac{4}{n}}m^{\frac{n+4}{4}}+
\tau(2\pi)^{2}\frac{n}{n+2}
\left(\frac{1}{w_{n}|\Omega|}\right)^{\frac{2}{n}}m^{\frac{n+2}{n}},
 \end{eqnarray*}
 where, as before, $|\Omega|$, $w_{n}$ denote the volume of $\Omega$ and
the volume of the unit ball in $\mathbb{R}^{n}$, respectively.
\end{theorem}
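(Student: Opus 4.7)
My plan is to mimic the Kr\"{o}ger-type argument of Brandolini--Chiacchio--Langford \cite{bcl} (established there for $\sigma=0$); the crucial observation will be that the Poisson's ratio $\sigma$ drops out of the natural sesquilinear form associated with (\ref{1-3}) when evaluated on plane waves. I first record this form, which as in (\ref{2-1}) reads
\[
a_{\sigma}(u,v)=\int_{\Omega}\Bigl\{(1-\sigma)\,D^{2}u:\overline{D^{2}v}+\sigma\,\Delta u\,\overline{\Delta v}+\tau\,\nabla u\cdot\overline{\nabla v}\Bigr\}\,dx,
\]
and note that, under the hypothesis $\tau\geq 0$ and $\sigma\in(-1/(n-1),1)$, Chasman \cite{lmc2} shows $a_{\sigma}$ to be coercive on $H^{2}(\Omega)$, which in turn yields the variational characterisation
\[
\Gamma_{j}(\Omega)=\min_{\substack{V\subset H^{2}(\Omega)\\\dim V=j}}\max_{u\in V\setminus\{0\}}\frac{a_{\sigma}(u,u)}{\|u\|_{L^{2}(\Omega)}^{2}}.
\]

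Next I would perform the key computation on the plane waves $e_{\xi}(x):=e^{i\xi\cdot x}$, $\xi\in\mathbb{R}^{n}$. Since $\partial_{j}e_{\xi}=i\xi_{j}e_{\xi}$ and $\partial_{j}\partial_{k}e_{\xi}=-\xi_{j}\xi_{k}e_{\xi}$, one has $|\nabla e_{\xi}|^{2}=|\xi|^{2}$, $|D^{2}e_{\xi}|^{2}=|\xi|^{4}$, $|\Delta e_{\xi}|^{2}=|\xi|^{4}$, whence
\[
a_{\sigma}(e_{\xi},e_{\xi})=\bigl[(1-\sigma)|\xi|^{4}+\sigma|\xi|^{4}+\tau|\xi|^{2}\bigr]|\Omega|=\bigl(|\xi|^{4}+\tau|\xi|^{2}\bigr)|\Omega|,
\]
while $\|e_{\xi}\|_{L^{2}(\Omega)}^{2}=|\Omega|$. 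The $\sigma$-contributions cancel algebraically, so the Rayleigh quotient of $e_{\xi}$ is the same function of $|\xi|$ that appears in the free-plate problem of \cite{bcl}. This is precisely what lets the general-$\sigma$ bound coincide with the one obtained there.

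I would then invoke the Kr\"{o}ger-type averaged variational principle on the family $\{e_{\xi}\}_{\xi\in B_{R}}$: choosing $R:=2\pi\bigl(m/(w_{n}|\Omega|)\bigr)^{1/n}$ so that $|\Omega|w_{n}R^{n}/(2\pi)^{n}=m$, min--max applied to the $m$-dimensional subspace extracted from the trial family yields
\[
\sum_{j=1}^{m}\Gamma_{j}(\Omega)\leq\frac{|\Omega|}{(2\pi)^{n}}\int_{B_{R}}\bigl(|\xi|^{4}+\tau|\xi|^{2}\bigr)\,d\xi.
\]
Evaluating the two radial integrals as $\int_{B_{R}}|\xi|^{4}\,d\xi=nw_{n}R^{n+4}/(n+4)$ and $\int_{B_{R}}|\xi|^{2}\,d\xi=nw_{n}R^{n+2}/(n+2)$ and inserting the value of $R^{n}$ will then recover the right-hand side of the theorem.

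The main obstacle I expect is the rigorous justification of the averaged variational principle itself, since the family $\{e_{\xi}\}_{\xi\in B_{R}}$ is not $L^{2}(\Omega)$-orthogonal. The standard workaround is to apply min--max to the first $m$ eigenvalues of the positive compact operator $f\mapsto\int_{B_{R}}\langle f,e_{\xi}\rangle_{L^{2}(\Omega)}\,e_{\xi}\,d\xi$, using the Plancherel bound $(2\pi)^{-n}\int_{\mathbb{R}^{n}}|\langle f,e_{\xi}\rangle|^{2}\,d\xi=\|f\|_{L^{2}(\Omega)}^{2}$ to control the Gram matrix. I expect this step to be structurally identical to the $\sigma=0$ argument of \cite{bcl}, precisely because the $\sigma$-dependence has already disappeared at the level of the Rayleigh quotient of the trial family.
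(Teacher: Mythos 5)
Your proposal is correct and follows essentially the same route as the paper: both implement Kr\"{o}ger's Fourier-transform argument with the trial family $\{e^{iy\cdot z}\}_{z\in B_r}$, and both hinge on the algebraic cancellation $(1-\sigma)|z|^{4}+\sigma|z|^{4}=|z|^{4}$, which the paper records in the computation of $I_{1}$. The paper realizes your ``averaged variational principle'' step concretely by inserting $\rho(z,y)=e^{iy\cdot z}-(2\pi)^{n/2}\widehat{\Phi}(z,y)$ into the Rayleigh quotient, expanding the numerator as $I_{1}+I_{2}+I_{3}$, and using the eigenfunction equation (together with Plancherel and Lemma A1 of \cite{bcl}) to evaluate $I_{2}$, $I_{3}$ --- which is exactly the spectral-decomposition identity you invoke.
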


Inspired by the proof of Theorem \ref{maintheorem} shown in Section
\ref{s3} below, one can also get the following estimates.

\begin{corollary} \label{corollary}
Under the assumptions in Theorem \ref{maintheorem}, we have:

(1) If $\tau>0$, then
 \begin{eqnarray*}
\Gamma_{m+1}(\Omega)\leq
\min\limits_{r>2\pi\left(\frac{m}{w_{n}|\Omega|}\right)^{\frac{1}{n}}}\frac{nw_{n}|\Omega|(\frac{r^{n+4}}{n+4}+\tau\frac{r^{n+2}}{n+2})}{{w_{n}}|\Omega|r^{n}-(2\pi)^{n}m},
\qquad m\geq0.
 \end{eqnarray*}

(2) If $\tau=0$, then
\begin{eqnarray*}
\Gamma_{m+1}(\Omega)\leq
(2\pi)^{4}\left(\frac{m(n+4)}{4w_{n}|\Omega|}\right)^{\frac{4}{n}},
\qquad m\geq0.
\end{eqnarray*}
\end{corollary}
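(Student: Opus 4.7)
The plan is to specialise the Kr\"{o}ger-type trial-function machinery that will be used to prove Theorem~\ref{maintheorem}, but keeping the radius parameter free and focusing on the single eigenvalue $\Gamma_{m+1}(\Omega)$ rather than summing over $j$. Let $\{u_j\}_{j\geq 1}$ be an $L^{2}(\Omega)$-orthonormal basis of eigenfunctions of (\ref{1-3}); these are also mutually $a$-orthogonal with $a(u_j,u_j)=\Gamma_j(\Omega)$, where $a(\cdot,\cdot)$ denotes the sesquilinear form from (\ref{2-1}). For each $\xi\in\mathbb{R}^{n}$ I take the trial function $\phi_\xi(x)=e^{i\xi\cdot x}$ used in Section~\ref{s3} and form its projection off the first $m$ eigenmodes,
$$v_\xi \;=\; \phi_\xi - \sum_{j=1}^{m}\langle\phi_\xi,u_j\rangle_{L^{2}(\Omega)}\,u_j.$$
Since $v_\xi\in H^{2}(\Omega)$ is $L^{2}$-orthogonal to $u_{1},\dots,u_{m}$, the variational characterisation of $\Gamma_{m+1}(\Omega)$ gives the pointwise (in $\xi$) inequality $\Gamma_{m+1}(\Omega)\,\|v_\xi\|_{L^{2}(\Omega)}^{2}\leq a(v_\xi,v_\xi)$.

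The next step is to integrate this inequality over the ball $B_r\subset\mathbb{R}^{n}$ and estimate both sides. Using $a$-orthogonality of the eigenbasis together with $\Gamma_j(\Omega)\geq 0$ (granted by the standing hypotheses; cf.\ \textbf{FACT} in Section~\ref{s2}) yields
$$a(v_\xi,v_\xi) \;=\; a(\phi_\xi,\phi_\xi) - \sum_{j=1}^{m}\Gamma_j(\Omega)\,|\langle\phi_\xi,u_j\rangle|^{2} \;\leq\; a(\phi_\xi,\phi_\xi).$$
A direct computation (the Poisson-ratio dependence disappears because $|D^{2}\phi_\xi|^{2}=|\xi|^{4}=|\Delta\phi_\xi|^{2}$) gives $a(\phi_\xi,\phi_\xi)=|\Omega|(|\xi|^{4}+\tau|\xi|^{2})$, so that
$$\int_{B_r}a(\phi_\xi,\phi_\xi)\,d\xi \;=\; n w_n|\Omega|\left(\frac{r^{n+4}}{n+4}+\tau\frac{r^{n+2}}{n+2}\right).$$
Extending each $u_j$ by zero to $\mathbb{R}^{n}$ and applying Plancherel (in the convention matching $\phi_\xi$) gives $\sum_{j=1}^{m}\int_{B_r}|\langle\phi_\xi,u_j\rangle|^{2}\,d\xi\leq(2\pi)^{n}m$, and therefore
$$\int_{B_r}\|v_\xi\|_{L^{2}(\Omega)}^{2}\,d\xi \;\geq\; w_n|\Omega|r^{n}-(2\pi)^{n}m,$$
which is strictly positive precisely when $r>2\pi(m/(w_n|\Omega|))^{1/n}$.

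Combining the last two displays produces, for every admissible $r$,
$$\Gamma_{m+1}(\Omega)\;\leq\;\frac{n w_n|\Omega|\bigl(\frac{r^{n+4}}{n+4}+\tau\frac{r^{n+2}}{n+2}\bigr)}{w_n|\Omega|r^{n}-(2\pi)^{n}m},$$
which is assertion~(1) upon taking the minimum in $r$. For assertion~(2) I set $\tau=0$ and optimise explicitly: writing $t=r^{n}$, the right-hand side becomes $\frac{n w_n|\Omega|}{n+4}\,t^{(n+4)/n}/(w_n|\Omega|t-(2\pi)^{n}m)$, and setting its derivative in $t$ to zero locates the minimiser at $r^{n}=(n+4)(2\pi)^{n}m/(4w_n|\Omega|)$. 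At this critical value the denominator simplifies to $(2\pi)^{n}m\,n/4$ and the whole expression collapses neatly to $r^{4}=(2\pi)^{4}\bigl((n+4)m/(4w_n|\Omega|)\bigr)^{4/n}$, which is the claimed inequality.

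Because this is a direct specialisation of the trial-function method that already proves Theorem~\ref{maintheorem}, I do not foresee any substantial obstacle. The two minor points to verify are that $v_\xi\in H^{2}(\Omega)$ for every $\xi$ (immediate, since $\phi_\xi\in C^{\infty}(\overline{\Omega})$ and the correction is a finite linear combination of smooth eigenfunctions) and that integration in $\xi$ preserves the direction of the pointwise Rayleigh-quotient inequality --- which it does because both $\|v_\xi\|_{L^{2}(\Omega)}^{2}$ and $a(v_\xi,v_\xi)$ are non-negative under the hypotheses $\tau\geq 0$, $\sigma\in(-1/(n-1),1)$. The only real computation is the one-variable optimisation behind assertion~(2), and that is elementary.
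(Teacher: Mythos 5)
Your argument is correct and follows essentially the same route as the paper: you use the Kr\"{o}ger trial function $e^{i\xi\cdot x}$ projected off the first $m$ eigenmodes, integrate the resulting Rayleigh-quotient inequality over $B_r$, bound the numerator by dropping the nonnegative $\Gamma_j$-terms and the denominator via Plancherel, and then optimize in $r$ for $\tau=0$. The only cosmetic difference is that you obtain the key identity $a(v_\xi,v_\xi)=a(\phi_\xi,\phi_\xi)-\sum_j\Gamma_j|\langle\phi_\xi,u_j\rangle|^2$ directly from $a$-orthogonality rather than by the paper's explicit expansion into the three integrals $I_1,I_2,I_3$ (i.e.\ you shortcut to the paper's inequality (\ref{3-5}) without recomputing it).
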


\begin{remark}
\rm{ (1) When $\tau\geq0$, $\sigma\in[0,1)$, the above three estimates have been shown in \cite[Theorem 1.4]{dmwxy}. Speaking in other words,
Theorem \ref{maintheorem} and Corollary \ref{corollary} here can be seen as an extension of \cite[Theorem 1.4]{dmwxy}.\\
 (2) Chasman's conjecture  mentioned
above (i.e., the Szeg\H{o}-Weinberg type isoperimetric inequality
for $\Gamma_{2}(\Omega)$) and his partial answer to this conjecture
tell us that conclusions for the BVP (\ref{1-2}) might not be
transferred to the case of the BVP (\ref{1-3}) directly, that is to
say, \emph{for the eigenvalue problem of free plate under tension,
sometimes, there exists difference between the zero Poisson's ratio
case and the nonzero case}. Based on this fact, it is attractive
that we can also get  Kr\"{o}ger-type estimates for the BVP
(\ref{1-3})
under suitable assumptions. \\
(3) It is surprising that the Kr\"{o}ger-type upper bounds given in
Theorem \ref{maintheorem} and Corollary \ref{corollary} are the same
as those in \cite[Theorem 1 and Corollary 2]{bcl} for eigenvalues of
the BVP (\ref{1-2}). \emph{This gives an example that sometimes
there does not exist obvious difference between the zero Poisson's
ratio case and the nonzero case}. }
\end{remark}

\section{Boundary conditions} \label{s2}
\renewcommand{\thesection}{\arabic{section}}
\renewcommand{\theequation}{\thesection.\arabic{equation}}
\setcounter{equation}{0}

In this section, we would like to give an explanation to the
rationality of boundary conditions such that one can understand the
BVP (\ref{1-3}) well.

As shown in \cite{lmc2}, the sesquilinear form associated with the
free plate problem (\ref{1-3}) is defined as follows
\begin{eqnarray} \label{2-1}
a(u,v)=\int_{\Omega}\left[(1-\sigma)\sum_{i,j=1}^{n}\overline{u_{x_{i}x_{j}}}v_{x_{i}x_{j}}+\sigma\overline{\Delta
u}\Delta v+\tau \overline{Du}\cdot Dv\right] dx ,\qquad u,v\in
H^{2}(\Omega),
\end{eqnarray}
where $D$ is the gradient operator on $\Omega$, and other symbols
have the same meanings as before. Moreover, for the BVP (\ref{1-3}),
its generalized Rayleigh quotient $Q[u]$ has the form
\begin{eqnarray*}
Q[u]:&=&\frac{\int_{\Omega}\left[(1-\sigma)|D^{2}u|^{2}+\sigma(\Delta
u)^{2}+\tau|Du|^{2}\right] dx}{\int_{\Omega}|Du|^{2}dx}\\
&=&\frac{a(u,u)}{\|u\|^{2}_{L^2}}.
\end{eqnarray*}
For the rest part of this section, we would consider the BVP
(\ref{1-3}) in the case $\tau\geq0$ and $\sigma\in(-1/(n-1),1)$. In
fact, if $\tau\geq0$ and $\sigma\in(-1/(n-1),1)$, Chasman
\cite[Section 4]{lmc2} showed that $a(\cdot,\cdot)$ is coercive, all
eigenvalues $\Gamma_{i}(\Omega)$ is nonnegative and the
corresponding eigenfunctions are \emph{real-valued} and smooth on
$\overline{\Omega}$. Hence, under the assumptions  $\tau\geq0$ and
$\sigma\in(-1/(n-1),1)$, one can neglect the effect of conjugate
part of the form $a(u,v)$, that is to say, if $\tau\geq0$ and
$\sigma\in(-1/(n-1),1)$, one can rewrite $a(u,v)$ as
\begin{eqnarray*}
a(u,v)=\int_{\Omega}\left[(1-\sigma)\sum_{i,j=1}^{n}u_{x_{i}x_{j}}v_{x_{i}x_{j}}+\sigma\Delta
u\Delta v+\tau Du\cdot Dv\right] dx ,\qquad u,v\in H^{2}(\Omega)
\end{eqnarray*}
directly. In fact, about the BVP (\ref{1-3}), one has the following
fundamental fact:

\begin{itemize}

\item \textbf{FACT}.
\emph{Let $u_{i}\in H^{2}(\Omega)$ be the eigenfunction of the
$i$-th eigenvalue $\Gamma_{i}(\Omega)$, $i=0,1,2,\cdots,m$,
$m=0,1,2,\cdots$. Then one has
\begin{eqnarray*}
0\leq\Gamma_{m+1}(\Omega)=\inf\left\{\frac{a(u,u)}{\|u\|^{2}_{L^2}}\Bigg{|}u\in
H^{2}(\Omega),\int_{\Omega}uu_{i}dx=0\right\}
 \end{eqnarray*}
 provided $\tau\geq0$ and
$\sigma\in(-1/(n-1),1)$.}
 \begin{proof}
The characterization of $\Gamma_{m+1}(\Omega)$ (i.e., the equality
case) can be easily obtained by variational method and the fact that
eigenfunctions belonging to different eigenvalues are orthogonal
with each other. If $\tau\geq0$, $\sigma\in[0,1)$, then the form
$a(u,u)$ is obviously nonnegative, which implies the nonnegativity
of $\Gamma_{m+1}(\Omega)$ naturally. If  $\tau\geq0$,
$\sigma\in(-1/(n-1),0)$, then by using \cite[FACT1]{lmc2}, one has
\begin{eqnarray*}
a(u,u)&\geq& (1-\sigma)\int_{\Omega}|D^{2}u|^{2}dx+
n\sigma\int_{\Omega}|D^{2}u|^{2}dx+\tau\int_{\Omega}|Du|^{2}dx\\
&=&\left(1+(n-1)\sigma\right)\int_{\Omega}|D^{2}u|^{2}dx+\tau\int_{\Omega}|Du|^{2}dx\geq0,
\end{eqnarray*}
which implies the nonnegativity of $\Gamma_{m+1}(\Omega)$. Our
\textbf{FACT} follows.
 \end{proof}

\end{itemize}
If $u$ is the weak solution of (\ref{1-3}), then we have
\begin{equation}  \label{2-2}
a(u,v)=\Gamma\int_{\Omega}uv dx,\qquad u,v\in H^{2}(\Omega).
\end{equation}
By direct calculation, one can obtain
 \begin{eqnarray*}
 \int_{\Omega}\overline{Du}\cdot Dv
dx=\int_{\partial\Omega}v\frac{\partial
u}{\partial\vec{v}}ds-\int_{\Omega}v\Delta udx
\end{eqnarray*}
and
 \begin{eqnarray*}
&&\qquad\int_{\Omega}\sum_{i,j=1}^{n}\overline{u_{x_{i}x_{j}}}v_{x_{i}x_{j}}dx\\
&&={\int_{\partial\Omega}\left[Dv\cdot((D^{2}u)\cdot\vec{v})-v\frac{\partial(\Delta u)}{\partial\vec{v}}\right]ds+\int_{\Omega}(\Delta^{2}u)v dx}\\
&&=\int_{\partial\Omega}\left[\frac{\partial
v}{\partial\vec{v}}\cdot\frac{\partial^{2}u}{\partial\vec{v}^{2}}-v
\mathrm{div}_{\partial\Omega}\left({\mathrm{Proj}}_{\partial\Omega}\left[(D^{2}u)\vec{v}\right]\right)-v\frac{\partial(\Delta
u)}{\partial\vec{v}}\right]ds+\int_{\Omega}(\Delta^{2}u)v dx.
 \end{eqnarray*}
Together with (\ref{2-1}) and (\ref{2-2}), we have
\begin{eqnarray} \label{2-3}
&&\int_{\Omega}[(1-\sigma)(\Delta^{2}u)v-\tau v\Delta u-\Gamma uv]dx+\sigma\int_{\Omega}\Delta v\overline{\Delta u}dx+\nonumber\\
&&\qquad\qquad
\int_{\partial\Omega}\left\{(1-\sigma)\left[\frac{\partial
v}{\partial\vec{v}}\cdot\frac{\partial^{2}u}{\partial\vec{v}^{2}}-v
\mathrm{div}_{\partial\Omega}\left({\mathrm{Proj}}_{\partial\Omega}\left[(D^{2}u)\vec{v}\right]\right)-
v\frac{\partial(\Delta u)}{\partial\vec{v}}\right]+\tau v\frac{\partial u}{\partial\vec{v}}\right\}ds\nonumber\\
&&=0.
\end{eqnarray}
Besides, applying the divergence theorem, one can easily get
\begin{eqnarray} \label{2-4}
\int_{\Omega}\Delta v\overline{\Delta u}dx&=&\int_{\partial\Omega}\Delta u\frac{\partial v}{\partial\vec{v}}ds-\int_{\Omega}D(\Delta u)\cdot Dv dx\nonumber\\
&=&\int_{\partial\Omega}\Delta u\frac{\partial
v}{\partial\vec{v}}ds-\left[\int_{\partial\Omega}v\frac{\partial(\Delta
u)}{\partial\vec{v}}ds-\int_{\Omega}
(\Delta^{2}u)v dx\right]\nonumber\\
&=&\int_{\partial\Omega}\left(\Delta u\frac{\partial
v}{\partial\vec{v}}-v\frac{\partial(\Delta
u)}{\partial\vec{v}}\right)ds+\int_{\Omega} (\Delta^{2}u)v dx.
\end{eqnarray}
Combining (\ref{2-3}) and (\ref{2-4}) yields
\begin{eqnarray} \label{2-5}
&&\int_{\Omega}\left(\Delta^{2}u-\tau \Delta u-\Gamma
u\right)vdx+\int_{\partial\Omega}\frac{\partial v}{\partial\vec{v}}
\left[(1-\sigma)\frac{\partial^{2}u}{\partial\vec{v}^{2}}+\sigma\Delta u\right]ds+\nonumber\\
&&\qquad \int_{\partial\Omega}v\left[\tau\frac{\partial
u}{\partial\vec{v}}-(1-\sigma)
\mathrm{div}_{\partial\Omega}\left({\mathrm{Proj}}_{\partial\Omega}\left[(D^{2}u)\vec{v}\right]\right)-\frac{\partial
(\Delta u)}{\partial\vec{v}}\right]ds\nonumber\\
&&=0.
\end{eqnarray}
In (\ref{2-5}), taking $v\in C^{\infty}_{0}(\Omega)$ to be a test
function and observing that any smooth function $v\in
C^{\infty}(\partial\Omega)$ can be extended to
$C^{\infty}(\overline{\Omega})$ with $\frac{\partial
v}{\partial\vec{v}}=0$ along the boundary $\partial\Omega$, one
knows that the equation $\Delta^{2}u-\tau \Delta u-\Gamma u=0$ holds
in $\Omega$, together with two boundary conditions
\begin{eqnarray*}
(1-\sigma)\frac{\partial^{2}u}{\partial\vec{v}^{2}}+\sigma\Delta u=0
\end{eqnarray*}
and
\begin{eqnarray*}
\tau\frac{\partial u}{\partial\vec{v}}-(1-\sigma)
\mathrm{div}_{\partial\Omega}\left({\mathrm{Proj}}_{\partial\Omega}\left[(D^{2}u)\vec{v}\right]\right)-\frac{\partial
(\Delta u)}{\partial\vec{v}}=0
\end{eqnarray*}
in $\partial\Omega$, which is the BVP (\ref{1-3}) exactly.

\section{Proof of the main result} \label{s3}
\renewcommand{\thesection}{\arabic{section}}
\renewcommand{\theequation}{\thesection.\arabic{equation}}
\setcounter{equation}{0}

Now, we would like to use the method introduced in \cite{pk} to
derive the estimate given in Theorem \ref{maintheorem}.

\begin{proof}[Proof of Theorem \ref{maintheorem}]
Let $\phi_{1}, \phi_{2}, \cdots,\phi_{m}$ represent the orthogonal
eigenfunctions in $H^{2}(\Omega)$ corresponding to
$\Gamma_{1}(\Omega),\Gamma_{2}(\Omega),\cdots,\Gamma_{m}(\Omega)$.
Define
\begin{eqnarray*}
\Phi(x,y)=\sum_{j=1}^{m}\phi_{j}(x)\phi_{j}(y), \qquad x,y\in\Omega.
\end{eqnarray*}
Let $\widehat{\Phi}(z,y)$ be the Fourier transform of $\Phi$ in the
variable $x$, that is to say,
\begin{eqnarray*}
\widehat{\Phi}(z,y)=\frac{1}{(2\pi)^{\frac{n}{2}}}\int_{\Omega}\Phi(x,y)e^{ix\cdot
z}dx.
\end{eqnarray*}
Hence, one has
\begin{eqnarray*}
(2\pi)^{\frac{n}{2}}\widehat{\Phi}(z,y)=\sum_{j=1}^{m}\phi_{j}(y)\int_{\Omega}e^{iz\cdot
x}\cdot\phi_{j}(x)dx.
\end{eqnarray*}
Set $h_{z}(y)=e^{iy\cdot z}$ and define
$\rho(z,y):=h_{z}(y)-(2\pi)^{\frac{n}{2}}\widehat{\Phi}(z,y)$. It is
easy to verify $\rho(z,y)\in H^{2}(\Omega)$. Using $\rho(z,y)$ as
the test function in the generalized Rayleigh quotient $Q$ yields
\begin{eqnarray*}
&&\Gamma_{m+1}(\Omega)\leq
Q[\rho(z,y)]=\frac{\int_{\Omega}\left[(1-\sigma)|D^{2}\rho|^{2}+\sigma(\Delta
\rho)^{2}+\tau|D\rho|^{2}\right]dy}{\int_{\Omega}\rho^{2}dy}\\
&&\qquad=\frac{(1-\sigma)\int_{\Omega}\sum\limits_{j,k=1}^{n}|\rho(z,y)_{y_{j}y_{k}}|^{2}dy+\sigma
\int_{\Omega}\sum\limits_{j=1}^{n}|\rho(z,y)_{y_{j}y_{j}}|^{2}dy+\tau\int_{\Omega}\sum\limits_{j=1}^{n}|\rho(z,y)_{y_{j}}|^{2}dy
}{\int_{\Omega}\rho^{2}dy}.
\end{eqnarray*}
Multiplying both sides of the above inequality by the denominator
and integrating over $B_{r}=\{z\in \mathbb{R}^{n}||z|<r\}$ result
into
 \begin{eqnarray*}
&&\Gamma_{m+1}(\Omega)\leq
\inf\limits_{r}\Bigg{\{}\frac{(1-\sigma)\int_{B_{r}}\int_{\Omega}\sum\limits_{j,k=1}^{n}|\rho(z,y)_{y_{j}y_{k}}|^{2}dydz+\sigma
\int_{B_{r}}\int_{\Omega}\sum\limits_{j=1}^{n}|\rho(z,y)_{y_{j}y_{j}}|^{2}dydz}{\int_{B_{r}}\int_{\Omega}\rho^{2}dydz}+\\
&&\qquad \qquad \qquad \qquad \qquad \qquad\qquad \qquad \qquad
\qquad \qquad
\frac{\tau\int_{B_{r}}\int_{\Omega}\sum\limits_{j=1}^{n}|\rho(z,y)_{y_{j}}|^{2}dydz
}{\int_{B_{r}}\int_{\Omega}\rho^{2}dydz}\Bigg{\}}\\
&& \qquad \qquad  := \inf\limits_{r}\left\{\frac{N}{D}\right\},
\end{eqnarray*}
where the infimum is taken over the set
$\left\{r|r>2\pi\left(\frac{m}{w_{n}|\Omega|}\right)^{\frac{1}{n}}\right\}$.
In order to get the conclusion, we need to estimate the numerator
$N$ and the  denominator $D$. By (11) of \cite{bcl}, one has
\begin{eqnarray} \label{3-1}
D=w_{n}|\Omega|r^{n}-(2\pi)^{n}\sum_{j=1}^{m}\int_{B_{r}}|\widehat{\phi_{j}}(z)|^{2}dz.
\end{eqnarray}
Rewrite $N$ as follows
 \begin{eqnarray*}
N=I_{1}+I_{2}+I_{3},
 \end{eqnarray*}
where
\begin{eqnarray*}
&&I_{1}=\sum\limits_{j,k=1}^{n}{\int_{B_{r}}}\int_{\Omega}(1-\sigma)|h_{z}(y)_{y_{j}y_{k}}|^{2}dydz+\sum\limits_{j=1}^{n}{\int_{B_{r}}}
\int_{\Omega}\sigma|h_{z}(y)_{y_{j}y_{j}}|^{2}dydz+\\
&&\qquad\qquad
\tau\sum\limits_{j=1}^{n}{\int_{B_{r}}}\int_{\Omega}|h_{z}(y)_{y_{j}}|^{2}dydz,
\end{eqnarray*}
\begin{equation*}
\begin{split}
I_{2}=&-2(1-\sigma)(2\pi)^{\frac{n}{2}}\mathrm{Re}\left\{\sum_{j,k=1}^{n}{\int_{B_{r}}}\int_{\Omega}h_{z}(y)_{y_{j}y_{k}}\overline{\widehat{\Phi}(z,y)_{y_{j}y_{k}}}dydz\right\}-
\\
&2\sigma(2\pi)^{\frac{n}{2}}\mathrm{Re}\left\{\sum_{j=1}^{n}{\int_{B_{r}}}\int_{\Omega}h_{z}(y)_{y_{j}y_{j}}\overline{\widehat{\Phi}(z,y)_{y_{j}y_{j}}}dydz\right\}-
\\ & 2\tau(2\pi)^{\frac{n}{2}}\mathrm{Re}\left\{\sum_{j=1}^{n}{\int_{B_{r}}}\int_{\Omega}h_{z}(y)_{y_{j}}\overline{\widehat{\Phi}(z,y)_{y_{j}}}dydz\right\}, \\
I_{3}=&(2\pi)^{n}\sum_{j,k=1}^{n}{\int_{B_{r}}}\int_{\Omega}(1-\sigma)|\widehat{\Phi}(z,y)_{y_{j}y_{k}}|^{2}dydz+ \\
&(2\pi)^{n}\sum_{j=1}^{n}{\int_{B_{r}}}\int_{\Omega}\sigma|\widehat{\Phi}(z,y)_{y_{j}y_{j}}|^{2}dydz+
(2\pi)^{n}\tau\sum_{j=1}^{n}{\int_{B_{r}}}\int_{\Omega}|\widehat{\Phi}(z,y)_{y_{j}}|^{2}dydz.
\end{split}
\end{equation*}
For $I_{1}$, using the facts
 \begin{eqnarray*}
|h_{z}(y)|=|e^{iy\cdot
z}|=1,~|h_{z}(y)_{y_{j}}|=|z_{j}|,~|h_{z}(y)_{y_{j}y_{k}}|=|z_{j}||z_{k}|,
 \end{eqnarray*}
 we have
 \begin{eqnarray} \label{3-2}
I_{1}&=&\int_{B_{r}}\int_{\Omega}\left[(1-\sigma)|z|^{4}+\sigma|z|^{4}+\tau|z|^{2}\right]dydz\nonumber\\
&=&nw_{n}|\Omega|
\left(\frac{r^{n+4}}{n+4}+\tau\frac{r^{n+2}}{n+2}\right).
 \end{eqnarray}
For $I_{2}$, we have
\begin{equation*}
\begin{split}
I_{2}&=-2(1-\sigma)(2\pi)^{\frac{n}{2}}\mathrm{Re}\left\{\sum_{j,k=1}^{n}\int_{B_r}\int_{\Omega}h_{z}(y)_{y_{j}y_{k}} \overline{\widehat{\Phi}(z,y)_{y_{j}y_{j}}}dydz \right\}\\
&\quad - 2\sigma(2\pi)^{\frac{n}{2}}\mathrm{Re}\left\{\sum_{j=1}^{n}{\int_{B_{r}}}\int_{\Omega}h_{z}(y)_{y_{j}y_{j}}\overline{\widehat{\Phi}(z,y)_{y_{j}y_{j}}}dydz \right\}\\
&\quad  -2\tau(2\pi)^{\frac{n}{2}}\mathrm{Re}\left\{\sum_{j=1}^{n}{\int_{B_{r}}}\int_{\Omega}h_{z}(y)_{y_{j}}\overline{\widehat{\Phi}(z,y)_{y_{j}}}dydz\right\}\\
&=-2(2\pi)^{\frac{n}{2}}\mathrm{Re}\left\{\int_{B_{r}}\int_{\Omega}\left(\sum\limits_{j,k=1}^{n}h_{z}(y)_{y_{j}y_{k}}\overline{\widehat{\Phi}(z,y)_{y_{j}y_{k}}}+ \tau\sum\limits_{j=1}^{n}h_{z}(y)_{y_{j}}\overline{\widehat{\Phi}(z,y)_{y_{j}}}\right)dydz\right\} \\
&\quad
+2\sigma(2\pi)^{\frac{n}{2}}\mathrm{Re}\left\{\int_{B_r}\int_{\Omega}\left(\sum_{j,k=1}^{n}h_{z}(y)_{y_{j}y_{k}}\overline{\widehat{\Phi}(z,y)_{y_{j}y_{k}}}-
\sum_{j=1}^{n}h_{z}(y)_{y_{j}y_{j}}\overline{\widehat{\Phi}(z,y)_{y_{j}y_{j}}}\right)dydz \right\}\\
&=2\sigma(2\pi)^{\frac{n}{2}}\mathrm{Re}\left\{\int_{B_r}\int_{\Omega}\left(\sum_{j,k=1}^{n}h_{z}(y)_{y_{j}y_{k}}\overline{\widehat{\Phi}(z,y)_{y_{j}y_{k}}}
    - \sum_{j=1}^{n}h_{z}(y)_{y_{j}y_{j}}\overline{\widehat{\Phi}(z,y)_{y_{j}y_{j}}}\right)dydz\right \}\\
&\quad-2(2\pi)^{n}\sum\limits_{j=1}^{m}\int_{B_{r}}\Gamma_{j}(\Omega)|\widehat{\phi_{j}}(z)|^{2}dz.
\end{split}
\end{equation*}
Since
$\widehat{\Phi}(z,y)=\sum_{j=1}^{m}\widehat{\phi}_{j}(z)\phi_{j}(y)$,
one has
\begin{equation*}
\begin{split}
&2\sigma(2\pi)^{\frac{n}{2}}\mathrm{Re}\left\{\int_{B_r}\int_{\Omega}\left(\sum_{j,k=1}^{n}h_{z}(y)_{y_{j}y_{k}}\overline{\widehat{\Phi}(z,y)_{y_{j}y_{k}}}- \sum_{j=1}^{n}h_{z}(y)_{y_{j}y_{j}}\overline{\widehat{\Phi}(z,y)_{y_{j}y_{j}}}\right)dydz \right\}\\
&\quad=2\sigma(2\pi)^{\frac{n}{2}}\mathrm{Re}\Bigg{\{}\int_{B_r}\int_{\Omega}\Bigg{(}h_{z}(y)\overline{\Delta_{y}^{2}\widehat{\Phi}(z,y)}-h_{z}(y)
\overline{\Delta_{y}^{2}\widehat{\Phi}(z,y)}\Bigg{)}dydz \Bigg{\}}\\
&\quad=0.
\end{split}
\end{equation*}
Therefore, we can obtain
\begin{eqnarray} \label{3-3}
 I_{2}=-2(2\pi)^n\sum_{j=1}^m \int_{B_r} \Gamma_{j}(\Omega) |\widehat{\phi_j}(z)|^2dz.
\end{eqnarray}
Finally, for $I_{3}$, one has
\begin{equation*}
\begin{split}
I_{3}=&(2\pi)^{n}{\int_{B_{r}}}\int_{\Omega}\left(\sum_{j,k=1}^{n}(1-\sigma)|\widehat{\Phi}(z,y)_{y_{j}y_{k}}|^{2}+\sigma\sum_{j=1}^{n}|\widehat{\Phi}(z,y)_{y_{j}y_{j}}|^{2}+
\tau\sum_{j=1}^{n}|\widehat{\Phi}(z,y)_{y_{j}}|^{2}\right)dydz\\
=&(2\pi)^{n}\sum_{l=1}^{m}\Gamma_{l}(\Omega)\int_{B_{r}}|\widehat{\phi}_{l}(z)|^{2}dz-
(2\pi)^{n}\sigma{\int_{B_{r}}}\int_{\Omega}\sum_{j,k=1}^{n}|\widehat{\Phi}(z,y)_{y_{j}y_{k}}|^{2}dydz+\\
&\qquad (2\pi)^{n}\sigma{\int_{B_{r}}}\int_{\Omega}\sum_{j=1}^{n}
|\widehat{\Phi}(z,y)_{y_{j}y_{j}}|^{2}dydz.
\end{split}
\end{equation*}
On the other hand,
\begin{equation*}
\begin{split}
&-(2\pi)^{n}\sigma{\int_{B_{r}}}\int_{\Omega}\sum_{j,k=1}^{n}|\widehat{\Phi}(z,y)_{y_{j}y_{k}}|^{2}dydz+
(2\pi)^{n}\sigma{\int_{B_{r}}}\int_{\Omega}\sum_{j=1}^{n}|\widehat{\Phi}(z,y)_{y_{j}y_{j}}|^{2}dydz\\
=&-(2\pi)^{n}\sigma{\int_{B_{r}}}\int_{\Omega}\sum_{j,k=1}^{n}\widehat{\Phi}(z,y)_{y_{j}y_{k}}\overline{\widehat{\Phi}(z,y)_{y_{j}y_{k}}}dydz+
(2\pi)^{n}\sigma{\int_{B_{r}}}\int_{\Omega}\sum_{j=1}^{n}\widehat{\Phi}(z,y)_{y_{j}y_{j}}\overline{\widehat{\Phi}(z,y)_{y_{j}y_{j}}}dydz\\
=&-(2\pi)^{n}\sigma{\int_{B_{r}}}\int_{\Omega}\left(\widehat{\Phi}(z,y)\overline{\Delta_{y}^{2}\widehat{\Phi}(z,y)_{y_{j}y_{j}}}-
\widehat{\Phi}(z,y)\overline{\Delta_{y}^{2}\widehat{\Phi}(z,y)_{y_{j}y_{j}}}\right)dydz\\
=& 0
\end{split}
\end{equation*}
Hence, we can deduce that
\begin{eqnarray} \label{3-4}
I_{3}=(2\pi)^{n}\sum_{l=1}^{m}\Gamma_{l}(\Omega)\int_{B_{r}}|\widehat{\phi}_{l}(z)|^{2}dz.
\end{eqnarray}
Combining (\ref{3-2})-(\ref{3-4}), it is easy to know
\begin{eqnarray*}
N=
nw_{n}|\Omega|\left(\frac{r^{n+4}}{n+4}+\tau\frac{r^{n+2}}{n+2}\right)-
(2\pi)^{n}\sum\limits_{l=1}^{m}\Gamma_{l}(\Omega)\int_{B_{r}}|\widehat{\phi}_{l}(z)|^{2}dz,
\end{eqnarray*}
which, together with (\ref{3-1}), implies
\begin{eqnarray} \label{3-5}
\Gamma_{m+1}(\Omega)\leq\inf_{r>2\pi\left(\frac{m}{w_{n}|\Omega|}\right)^{\frac{1}{n}}}\left\{
\frac{\frac{nw_{n}}{(2\pi)^{n}}|\Omega|\left(\frac{r^{n+4}}{n+4}+\tau\frac{r^{n+2}}{n+2}\right)-
\sum\limits_{l=1}^{m}\Gamma_{l}(\Omega)\int_{B_{r}}|\widehat{\phi}_{l}(z)|^{2}dz}
{\frac{w_{n}|\Omega|r^{n}}{(2\pi)^{n}}-\sum\limits_{j=1}^{m}\int_{B_{r}}|\widehat{\phi_{j}}(z)|^{2}dz}\right\}.
\end{eqnarray}
By Plancherel's Theorem, one has
\begin{eqnarray} \label{3-6}
\int_{B_{r}}|\widehat{\phi_{j}}(z)|^{2}\leq1
\end{eqnarray}
for each $j$. Applying \textbf{FACT} in Section \ref{s2}
(equivalently, the nonnegativity of eigenvalues), (\ref{3-6}) and
\cite[Lemma A1]{bcl} (see also \cite{lt}) to (\ref{3-5}) yields
\begin{eqnarray*}
\sum\limits_{j=1}^{m}\Gamma_{j}(\Omega)\leq\inf_{r>2\pi\left(\frac{m}{w_{n}|\Omega|}\right)^{\frac{1}{n}}}\left\{\frac{nw_{n}}{(2\pi)^{n}}|\Omega|\left(\frac{r^{n+4}}{n+4}
+\tau\frac{r^{n+2}}{n+2}\right)\right\}.
\end{eqnarray*}
The estimate in Theorem \ref{maintheorem} follows directly by
letting
$r\rightarrow2\pi\left(\frac{m}{w_{n}|\Omega|}\right)^{\frac{1}{n}}$.
\end{proof}

Finally, we have:

\begin{proof} [Proof of Corollary \ref{corollary}]
It follows from (\ref{3-5}) and (\ref{3-6}) that
\begin{eqnarray*}
\Gamma_{m+1}(\Omega)\leq\inf_{r>2\pi\left(\frac{m}{w_{n}|\Omega|}\right)^{\frac{1}{n}}}\left\{
\frac{\frac{nw_{n}}{(2\pi)^{n}}|\Omega|\left(\frac{r^{n+4}}{n+4}+\tau\frac{r^{n+2}}{n+2}\right)}
{\frac{w_{n}|\Omega|r^{n}}{(2\pi)^{n}}-m}\right\},
\end{eqnarray*}
which is the first estimate in Corollary \ref{corollary}. Define a
function $F(r)$ as
\begin{eqnarray*}
F(r):=\frac{\frac{nw_{n}}{(2\pi)^{n}}|\Omega|\left(\frac{r^{n+4}}{n+4}+\tau\frac{r^{n+2}}{n+2}\right)}
{\frac{w_{n}|\Omega|r^{n}}{(2\pi)^{n}}-m},\qquad
r>2\pi\left(\frac{m}{w_{n}|\Omega|}\right)^{\frac{1}{n}}.
\end{eqnarray*}
If $\tau=0$, then
\begin{eqnarray*}
F'(r)=\left[w_{n}|\Omega|r^{n}-m(2\pi)^{n}\right]^{-2}\cdot\Bigg{\{}nw_{n}|\Omega|r^{n+3}\left(w_{n}|\Omega|r^{n}-m(2\pi)^{n}\right)-
nw_{n}|\Omega|r^{n-1}\frac{n}{n+4}w_{n}|\Omega|r^{n+4}\Bigg{\}}.
\end{eqnarray*}
Solving the equation $F'(r)=0$ yields its solution
$r_{0}=2\pi\left(\frac{m(n+4)}{4w_{n}|\Omega|}\right)^{\frac{1}{n}}$,
The second estimate in Corollary \ref{corollary} follows directly by
using the fact $\Gamma_{m+1}(\Omega)\leq F(r_0)$.
\end{proof}

\section*{Acknowledgments}
\renewcommand{\thesection}{\arabic{section}}
\renewcommand{\theequation}{\thesection.\arabic{equation}}
\setcounter{equation}{0} \setcounter{maintheorem}{0}

This work is partially supported by the NSF of China (Grant Nos.
11801496 and 11926352), the Fok Ying-Tung Education Foundation
(China) and  Hubei Key Laboratory of Applied Mathematics (Hubei
University).

\end{document}